\documentclass[12pt,reqno]{amsart}
\usepackage{amsmath,amssymb,amsfonts,amscd,latexsym,amsthm,mathrsfs}
\usepackage[usenames]{color}
\usepackage[unicode]{hyperref}
\textheight22cm \textwidth15cm \hoffset-1.7cm \voffset-.5cm
\makeatletter
\renewcommand\thesubsection{\@arabic\c@subsection}
\makeatother

\let\ol\overline
\renewcommand{\d}{{\mathrm d}}

\newcommand{\Li}{\operatorname{Li}}
\renewcommand{\Re}{\operatorname{Re}}
\renewcommand{\Im}{\operatorname{Im}}

%
\newtheorem{theorem}{Theorem}
\newtheorem{lemma}{Lemma}

\theoremstyle{remark}

\begin{document}

\title{A study of elliptic gamma function and allies}

\date{31 December 2017. \emph{Revised}: 25 April 2018}

\author{Vicen\c tiu Pa\c sol}
\address{Simion Stoilow Institute of Mathematics of the Romanian Academy, P.O.\ Box 1-764, 014700 Bucharest, Romania}
\email{vicentiu.pasol@imar.ro}

\author{Wadim Zudilin}
\address{Department of Mathematics, IMAPP, Radboud University, PO Box 9010, 6500~GL Nijmegen, Netherlands}
\email{w.zudilin@math.ru.nl}

\address{School of Mathematical and Physical Sciences, The University of Newcastle, Callag\-han, NSW 2308, Australia}
\email{wadim.zudilin@newcastle.edu.au}

\address{Laboratory of Mirror Symmetry and Automorphic Forms, National Research University Higher School of Economics, 6 Usacheva str., 119048 Moscow, Russia}
\email{wzudilin@gmail.com}

\dedicatory{To Don Zagier, in admiration of his insights on modular, elliptic and polylogarithmic functions}

\thanks{The first author was partially supported by a grant of Romanian Ministry of Research and Innovation,
CNCS\,--\,UEFISCDI, project number PN-III-P4-ID-PCE-2016-0157, within PNCDI III7.
The second author is partially supported by Laboratory of Mirror Symmetry NRU HSE, RF government grant, ag.\ no.\ 14.641.31.0001.}


\keywords{Theta function; elliptic gamma function; elliptic dilogarithm; elliptic polylogarithm}

\begin{abstract}
We study analytic and arithmetic properties of the elliptic gamma function
$$
\prod_{m,n=0}^\infty\frac{1-x^{-1}q^{m+1}p^{n+1}}{1-xq^mp^n},
\qquad |q|,|p|<1,
$$
in the regime $p=q$; in particular, its connection with the elliptic dilogarithm and a formula of S.~Bloch.
We further extend the results to more general products by linking them to non-holomorphic Eistenstein series
and, via some formulae of D.~Zagier, to elliptic polylogarithms.
\end{abstract}

\maketitle

\section{Introduction}
\label{sec1}

For complex $z$ and $\tau$ with $\Im\tau>0$, set $x=e^{2\pi iz}$ and $q=e^{2\pi i\tau}$.
Transformation properties of the so-called \emph{short} theta-function
$$
\theta_0(z;\tau):=\prod_{m=0}^\infty(1-x^{-1}q^{m+1})(1-xq^m)
$$
under the action of the modular group are well understood. In view of its transparent invariance under translation
$\tau\mapsto\tau+1$, the main source of the modular action originates from the $\tau$-involution
\begin{equation}
z\mapsto\hat z=\frac z\tau, \quad \tau\mapsto\hat\tau=-\frac1\tau.
\label{invo}
\end{equation}
The related classical transformation of $\theta_0(z;\tau)$ can be recorded as
\begin{equation}
q^{1/12}x^{-1/2}\theta_0(z;\tau)
=ie^{-\pi iz\hat z}\hat q^{1/12}\hat x^{-1/2}\theta_0(\hat z;\hat\tau)
\label{trans1}
\end{equation}
(see, for example, \cite[Section 2]{FV00}), where we define $\hat x=e^{2\pi i\hat z}$ and $\hat q=e^{2\pi i\hat\tau}$.

Less is known about modular properties of the related product
\begin{equation*}
\theta_1(z;\tau):=\prod_{m=0}^\infty\frac{(1-x^{-1}q^{m+1})^{m+1}}{(1-xq^m)^m},
\end{equation*}
which naturally comes as the $\sigma=\tau$ specialisation of the elliptic gamma function
$$
\Gamma(z;\tau,\sigma):=\prod_{m,n=0}^\infty\frac{1-x^{-1}q^{m+1}p^{n+1}}{1-xq^mp^n},
\quad\text{where}\; p=e^{2\pi i\sigma},
$$
introduced by S.~Ruijsenaars in \cite{Ru97} (see also~\cite{FV00,FV05}).
Namely, we have
$$
\theta_1(z;\tau)
=\theta_0(z;\tau)\Gamma(z;\tau,\tau)=\Gamma(z+\tau;\tau,\tau).
$$
A known functional equation of the elliptic gamma function \cite[Theorem 4.1]{FV00}
represents an $\operatorname{SL}_3(\mathbb Z)$ symmetry of $\Gamma(z;\tau,\sigma)$.
The problem of determining its behaviour in the regime $\sigma=\tau$ under $\operatorname{SL}_2(\mathbb Z)$ transformations is specifically addressed in \cite{DI07}, where the (logarithm of the) infinite product
is related to the elliptic dilogarithm via a formula of S.~Bloch \cite{Bl77}.

Our principal aim in this note is re-casting analytic and arithmetic (modular) properties of the
function $\theta_1(z;\tau)$ and its relatives, in particular, linking them to non-holomorphic Eisenstein series
and the elliptic dilogarithm. This programme is carried out in Sections~\ref{sec2}--\ref{sec4}; it gives
a new proof of Bloch's formula and related results from~\cite{DI07}. In Section~\ref{sec5} we go further
to discuss similar features of products that generalise ones for $\theta_0$ and $\theta_1$;
their relationship with non-holomorphic Eisenstein series and formulae from~\cite{Za90} allow us
to link them to elliptic polylogarithms.

For future record, notice that iterating the transformation $(z,\tau)\mapsto(\hat z,\hat\tau)$ twice maps $(z,\tau)$ to $(-z,\tau)$ and that
\begin{equation}
\theta_1(-z;\tau)=\frac1{\theta_1(z;\tau)}
\quad\text{and}\quad
\theta_0(-z;\tau)=-x^{-1}\theta_0(z;\tau).
\label{-z}
\end{equation}

\section{Period functions}
\label{sec2}

A natural way of measuring failure of weight $k$ modular behaviour under the transformation $(z,\tau)\mapsto(\hat z,\hat\tau)$ for a function $f(z,\tau)$
is through the \emph{period} function
$$
g(z,\tau)=g_k(z,\tau):=f(\hat z,\hat\tau)-\tau^k f(z,\tau).
$$

\begin{lemma}
\label{lem1}
We have
$$
\tau^k g(\hat z,\hat\tau)+(-1)^k g(z,\tau)
=\tau^k\bigl(f(-z,\tau)-(-1)^kf(z,\tau)\bigr).
$$
\end{lemma}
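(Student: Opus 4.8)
The plan is to compute $g(\hat z,\hat\tau)$ directly from the definition and substitute back. Since $g(z,\tau)=f(\hat z,\hat\tau)-\tau^k f(z,\tau)$, replacing $(z,\tau)$ by $(\hat z,\hat\tau)$ gives $g(\hat z,\hat\tau)=f(\hat{\hat z},\hat{\hat\tau})-\hat\tau^k f(\hat z,\hat\tau)$. The key input from the excerpt is that iterating the involution \eqref{invo} twice sends $(z,\tau)$ to $(-z,\tau)$, so $f(\hat{\hat z},\hat{\hat\tau})=f(-z,\tau)$; also $\hat\tau=-1/\tau$, so $\hat\tau^k=(-1)^k\tau^{-k}$. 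Hence
$$
g(\hat z,\hat\tau)=f(-z,\tau)-(-1)^k\tau^{-k}f(\hat z,\hat\tau).
$$

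Next I would multiply through by $\tau^k$ to clear denominators:
$$
\tau^k g(\hat z,\hat\tau)=\tau^k f(-z,\tau)-(-1)^k f(\hat z,\hat\tau).
$$
Now I use the defining relation in the form $f(\hat z,\hat\tau)=g(z,\tau)+\tau^k f(z,\tau)$ to eliminate $f(\hat z,\hat\tau)$ from the right-hand side:
$$
\tau^k g(\hat z,\hat\tau)=\tau^k f(-z,\tau)-(-1)^k\bigl(g(z,\tau)+\tau^k f(z,\tau)\bigr).
$$
Rearranging, $\tau^k g(\hat z,\hat\tau)+(-1)^k g(z,\tau)=\tau^k\bigl(f(-z,\tau)-(-1)^k f(z,\tau)\bigr)$, which is exactly the claimed identity.

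This is a short formal manipulation, so there is no substantial obstacle; the only point requiring care is the bookkeeping of the double involution. One should verify that $(\hat z,\hat\tau)\mapsto(\widehat{\hat z},\widehat{\hat\tau})$ indeed yields $z\mapsto z/\tau \mapsto (z/\tau)/(-1/\tau)=-z$ and $\tau\mapsto -1/\tau\mapsto -1/(-1/\tau)=\tau$, which the excerpt already records, and to track the sign coming from $\hat\tau^k=(-1)^k\tau^{-k}$ consistently. Everything else is elementary algebra, and no convergence or analytic issues arise since the statement is a pure identity among the symbols $f$, $g$ once $f$ is given.
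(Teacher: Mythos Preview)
Your proof is correct and follows essentially the same approach as the paper: both use only the facts $(\Hat{\hat z},\Hat{\hat\tau})=(-z,\tau)$ and $\hat\tau^k=(-1)^k\tau^{-k}$, expand the definition of $g$, and cancel. The paper does the computation in a single display by expanding $\tau^k g(\hat z,\hat\tau)+(-1)^k g(z,\tau)$ directly, while you first isolate $g(\hat z,\hat\tau)$ and then substitute $f(\hat z,\hat\tau)=g(z,\tau)+\tau^k f(z,\tau)$; the substance is identical.
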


Observe that the expression in the parentheses on the right-hand side measures the failure of $k$-parity of $f(z,\tau)$.

\begin{proof}
We only use $(\Hat{\hat z},\Hat{\hat\tau})=(-z,\tau)$ and $\tau\hat\tau=-1$:
\begin{align*}
\tau^k g(\hat z,\hat\tau)- g(z,\tau)
&=\tau^k\bigl(f(-z,\tau)-{\hat\tau}^kf(\hat z,\hat\tau)\bigr)+(-1)^k\bigl(f(\hat z,\hat\tau)-\tau^k f(z,\tau)\bigr)
\\
&=\tau^k\bigl(f(-z,\tau)-(-1)^kf(z,\tau)\bigr).
\qedhere
\end{align*}
\end{proof}

The lemma and the parity relation for $\ln\theta_1(z;\tau)$ in \eqref{-z} imply the following.

\begin{lemma}
\label{lem2}
The function
\begin{equation}
T(z;\tau)=\tau\ln\theta_1(z;\tau)-\ln\theta_1(\hat z;\hat\tau)
\label{T}
\end{equation}
satisfies the functional equation
$$
T(\hat z;\hat\tau)=\tau^{-1}T(z;\tau).
$$
\end{lemma}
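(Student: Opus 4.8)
The plan is to obtain this directly from Lemma~\ref{lem1}, applied to $f(z,\tau)=\ln\theta_1(z;\tau)$ with $k=1$. First I would observe that with this choice the period function of Section~\ref{sec2} is
$$
g_1(z,\tau)=f(\hat z,\hat\tau)-\tau f(z,\tau)=\ln\theta_1(\hat z;\hat\tau)-\tau\ln\theta_1(z;\tau)=-T(z;\tau),
$$
so the asserted relation $T(\hat z;\hat\tau)=\tau^{-1}T(z;\tau)$ is equivalent to $g_1(\hat z,\hat\tau)=\tau^{-1}g_1(z,\tau)$.

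Next I would check that the right-hand side of the identity in Lemma~\ref{lem1} vanishes for this $f$ and $k=1$. The parity relation $\theta_1(-z;\tau)=1/\theta_1(z;\tau)$ from \eqref{-z} yields $\ln\theta_1(-z;\tau)=-\ln\theta_1(z;\tau)$, that is, $f(-z,\tau)-(-1)^1 f(z,\tau)=f(-z,\tau)+f(z,\tau)=0$. Here I would pause to pin down the branch of the logarithm — say, defining $\ln\theta_1(z;\tau)=\sum_{m\ge0}\bigl((m+1)\ln(1-x^{-1}q^{m+1})-m\ln(1-xq^m)\bigr)$ through the (termwise absolutely convergent) series with the principal branch of each factor, equivalently by analytic continuation from the region where all factors lie near $1$ — so that the multiplicative parity relation for $\theta_1$ lifts to the additive one with no stray additive constant. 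This is the only spot where a bit of care is needed.

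With this in hand, Lemma~\ref{lem1} for $k=1$ reads $\tau\,g_1(\hat z,\hat\tau)-g_1(z,\tau)=\tau\bigl(f(-z,\tau)+f(z,\tau)\bigr)=0$, whence $g_1(\hat z,\hat\tau)=\tau^{-1}g_1(z,\tau)$, and substituting $g_1=-T$ gives the claim. There is essentially no hard step: the entire content is already packaged in Lemma~\ref{lem1} together with the oddness of $\ln\theta_1$ in $z$, and once the logarithm is fixed consistently the argument is a one-line substitution.
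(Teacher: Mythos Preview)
Your argument is correct and is exactly the route the paper takes: the paper states that Lemma~\ref{lem2} follows from Lemma~\ref{lem1} combined with the parity relation $\theta_1(-z;\tau)=1/\theta_1(z;\tau)$ in~\eqref{-z}, and you have spelled out precisely that deduction. Your added remark on fixing the branch of $\ln\theta_1$ so that the multiplicative parity lifts to an additive one is a welcome clarification, but otherwise there is nothing to distinguish your proof from the paper's.
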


Furthermore, we can relate the function $T(z;\tau)$ to the the dilogarithm function
$$
\Li_2(x)=-\int_0^x\ln(1-t)\,\frac{\d t}t.
$$

\begin{lemma}
\label{lem3}
The function \eqref{T} admits the following representation:
\begin{align*}
T(z;\tau)
&=\frac{\pi i(\tau-2z)(1+2\tau z-2z^2)}{12\tau}
+z\ln\theta_0(z;\tau)
\\ &\qquad
-\frac1{2\pi i}\sum_{m=0}^\infty\bigl(\Li_2(x^{-1}q^{m+1})-\Li_2(xq^m)\bigr).
\end{align*}
\end{lemma}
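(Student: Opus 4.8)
The plan is to compute $T(z;\tau)$ directly from the product definition of $\theta_1$, writing everything in terms of logarithms of $1-xq^m$ and $1-x^{-1}q^{m+1}$ and then recognising the resulting $\hat\tau$-series as a dilogarithm after resummation. First I would expand $\ln\theta_1(z;\tau)=\sum_{m\ge0}\bigl((m+1)\ln(1-x^{-1}q^{m+1})-m\ln(1-xq^m)\bigr)$, and similarly for $\ln\theta_1(\hat z;\hat\tau)$ with $x,q$ replaced by $\hat x,\hat q$. The key analytic input is the classical modular transformation \eqref{trans1} for the short theta function $\theta_0$: taking logarithms there expresses $\ln\theta_0(\hat z;\hat\tau)$ in terms of $\ln\theta_0(z;\tau)$ plus an explicit elementary (quadratic/linear in $z,\tau$) term coming from $q^{1/12}x^{-1/2}$, $e^{-\pi iz\hat z}$ and the factor $i$. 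I expect this to account for the polynomial prefactor $\frac{\pi i(\tau-2z)(1+2\tau z-2z^2)}{12\tau}$ together with a stray $z\ln\theta_0(z;\tau)$ contribution, so the first step is really to isolate what is \emph{not} explained by the $\theta_0$-transformation.

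Next I would handle the genuinely $\Gamma$-theoretic part. Since $\theta_1(z;\tau)=\theta_0(z;\tau)\Gamma(z;\tau,\tau)$, the function $T$ splits as a $\theta_0$-piece (handled above via \eqref{trans1}) plus $\tau\ln\Gamma(z;\tau,\tau)-\ln\Gamma(\hat z;\hat\tau,\hat\tau)$. For the $\Gamma$-piece I would use the logarithmic series $\ln\Gamma(z;\tau,\sigma)=\sum_{k\ge1}\frac1k\,\frac{x^k q^k p^k - x^{-k}q^k p^k}{(1-q^k)(1-p^k)}$ — actually more convenient is to differentiate: $\frac{\partial}{\partial z}\ln\Gamma$ has a clean shape. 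The idea is that the combination $\tau F(\hat z,\hat\tau)$-type terms, after using $\tau\hat\tau=-1$, telescope against the single sum $\sum_{m\ge0}(1-q^k)^{-1}$-type expansions, and the Lerch/Lewin functional equation for $\Li_2$ (or just $\Li_2(y)=\sum_{j\ge1}y^j/j^2$) reorganises $\sum_{k\ge1}\frac1{k^2}\cdot(\text{geometric in }q)$ into $\sum_{m\ge0}(\Li_2(x^{-1}q^{m+1})-\Li_2(xq^m))$. So the middle step is a double-sum interchange: write the $p=q$ double product of $\Gamma$ as an iterated sum over $k\ge1$ and over $m\ge0$, and recognise the inner sum as a dilogarithm value.

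The main obstacle will be bookkeeping the convergence and the branch choices in the region where $|q|<1$ but $z$, $\hat z$ range over a fundamental domain — in particular matching the $e^{-\pi iz\hat z}$ and the $q^{1/12}$, $\hat q^{1/12}$ factors to get the precise rational function $\frac{(\tau-2z)(1+2\tau z-2z^2)}{12\tau}$ rather than something off by an additive constant or a term in $z$ alone. I would fix this by checking the identity as an identity of (real-)analytic functions after verifying it holds to low order in a $q$-expansion (the $q^0$ term pins down the elementary part) and confirming both sides satisfy the same first-order ODE in $z$; since $\frac{\partial}{\partial z}\Li_2(xq^m)=-2\pi i\ln(1-xq^m)$ and $\frac{\partial}{\partial z}\ln\theta_0=2\pi i\sum_m\bigl(\frac{x^{-1}q^{m+1}}{1-x^{-1}q^{m+1}}+\frac{xq^m}{1-xq^m}\bigr)$, the $z$-derivative comparison reduces to an elementary identity among the geometric-type sums, and then one constant of integration (checked at, say, $z=\tau/2$ or via the already-known functional equation of Lemma~\ref{lem2}) completes the argument.
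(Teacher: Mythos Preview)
Your outline is broadly on target and in spirit close to what the paper does, but the paper's main proof takes a shortcut you do not mention: it quotes Felder--Varchenko's $\operatorname{SL}_3$ modular formula for $\Gamma$ (Theorem~5.2 of \cite{FV00}), which already writes $\ln\theta_1(z;\tau)$ as an explicit polynomial $-\pi i\lambda(z;\tau)$ plus $\ln\theta_0(z;\tau)-\ln\theta_0(\hat z;\hat\tau)$ plus several one-variable sums in $\hat x,\hat q$. From that starting point the paper only has to expand $1/(1-\hat q^k)$ and $\hat q^k/(1-\hat q^k)^2$ geometrically, swap sums, recognise $\ln\theta_0(\hat z;\hat\tau)$, $\ln\theta_1(\hat z;\hat\tau)$ and the $\Li_2$ sum, and then apply the involution $(z,\tau)\mapsto(\hat z,\hat\tau)$. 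No functional equation for $\Li_2$ is used; the dilogarithms appear simply as $\sum_k y^k/k^2$.

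Your ODE-in-$z$ fallback is essentially the paper's parenthetical alternative (``logarithmically differentiate \eqref{trans1} with respect to $\tau$ and integrate in $z$''), but you are missing the one identity that makes it work: $\partial_z\ln\theta_1(z;\tau)=-\partial_\tau\ln\theta_0(z;\tau)$, which is immediate from the product forms. With this, the $\tau$-derivative of the \emph{known} transformation \eqref{trans1} becomes, after the chain rule, precisely a relation between $\partial_z\ln\theta_1$ at $(z,\tau)$ and at $(\hat z,\hat\tau)$, plus elementary terms; integrating in $z$ then gives the lemma. Without that identification your claim that ``the $z$-derivative comparison reduces to an elementary identity among the geometric-type sums'' is not justified: $\partial_z T(z;\tau)$ still contains $\tau^{-1}\partial_{\hat z}\ln\theta_1(\hat z;\hat\tau)$, a series in $\hat q$, and converting that to a $q$-series is exactly the modular input you need --- it is not elementary bookkeeping. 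So either invoke the Felder--Varchenko formula directly (as the paper does), or make the $\partial_z\ln\theta_1=-\partial_\tau\ln\theta_0$ step explicit and carry out the paper's alternative route.
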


\begin{proof}
As shown in the proof of Theorem~5.2 in \cite{FV00},
\begin{align*}
\ln\theta_1(z;\tau)
&=\ln\theta_0(z;\tau)+\ln\Gamma(z;\tau,\tau)
\\
&=-\pi i\lambda(z;\tau)+\ln\frac{\theta_0(z;\tau)}{\theta_0(\hat z;\hat\tau)}
\\ &\qquad
+(\hat\tau-\hat z)\sum_{k=1}^\infty\frac{(\hat x^{-1}\hat q)^k}{k(1-\hat q^k)}
-\hat z\sum_{k=1}^\infty\frac{\hat x^k}{k(1-\hat q^k)}
\\ &\qquad
+\frac1{2\pi i}\sum_{k=1}^\infty\frac{\hat x^k-(\hat x^{-1}\hat q)^k}{k^2(1-\hat q^k)}
-\hat\tau\sum_{k=1}^\infty\frac{\hat q^k(\hat x^k-(\hat x^{-1}\hat q)^k)}{k(1-\hat q^k)^2},
\end{align*}
where
$$
\lambda(z;\tau)=\frac{z^3}{3\tau^2}-\frac{2\tau-1}{2\tau^2}\,z^2+\frac{(\tau-1)(5\tau-1)}{6\tau^2}\,z-\frac{(\tau-2)(2\tau-1)}{12\tau}
$$
and the assumptions $|\hat x|,|\hat x^{-1}\hat q|<1$ are made to ensure convergence.
(The latter can be dropped in the final result by appealing to the analytic continuation in~$z$.)
Recalling the transformation \eqref{trans1}, using
$$
\frac1{1-\hat q^k}=\sum_{m=0}^\infty\hat q^{mk}
\quad\text{and}\quad
\frac{\hat q^k}{(1-\hat q^k)^2}=\sum_{m=0}^\infty m\hat q^{mk},
$$
interchanging summation and summing over $k$, we obtain
\begin{align*}
\ln\theta_1(z;\tau)
&=-\pi i\biggl(\lambda(z;\tau)-\frac12+\frac{z^2}\tau+\frac\tau6-z+\frac1{6\tau}+\frac z\tau\biggr)
\\ &\qquad
+\hat z\sum_{m=0}^\infty\bigl(\ln(1-\hat x^{-1}\hat q^{m+1})+\ln(1-\hat x\hat q^m)\bigr)
\\ &\qquad
-\hat\tau\sum_{m=0}^\infty\bigl((m+1)\ln(1-\hat x^{-1}\hat q^{m+1})-m\ln(1-\hat x\hat q^m)\bigr)
\\ &\qquad
-\frac1{2\pi i}\sum_{m=0}^\infty\bigl(\Li_2(\hat x^{-1}\hat q^{m+1})-\Li_2(\hat x\hat q^m)\bigr)
\displaybreak[2]\\
&=\frac{\pi i}{12}\biggl((1+2z)-\frac{2z(1+z)(1+2z)}{\tau^2}\biggr)
+\hat z\ln\theta_0(\hat z;\hat\tau)
-\hat\tau\ln\theta_1(\hat z;\hat\tau)
\\ &\qquad
-\frac1{2\pi i}\sum_{m=0}^\infty\bigl(\Li_2(\hat x^{-1}\hat q^{m+1})-\Li_2(\hat x\hat q^m)\bigr).
\end{align*}
(This formula can be alternatively derived from logarithmically differentiating identity \eqref{trans1} with respect to~$\tau$
and further integrating the result with respect to~$z$.)
Substituting $(z/\tau,-1/\tau)$ for $(z,\tau)$ translates the result into
\begin{align*}
\tau\ln\theta_1(z;\tau)-\ln\theta_1(\hat z;\hat\tau)
&=\frac{\pi i(\tau-2z)(1+2\tau z-2z^2)}{12\tau}
+z\ln\theta_0(z;\tau)
\\ &\qquad
-\frac1{2\pi i}\sum_{m=0}^\infty\bigl(\Li_2(x^{-1}q^{m+1})-\Li_2(xq^m)\bigr),
\end{align*}
the desired relation.
\end{proof}

\section{Non-holomorphic modularity}
\label{sec3}

Denote
$$
A=A(z,\tau):=\frac{z-\ol z}{\tau-\ol\tau}\in\mathbb R,
$$
so that
$$
\hat A=A(\hat z,\hat\tau):=\frac{z\ol\tau-\ol z\tau}{\tau-\ol\tau}\in\mathbb R
$$
and $z=A\tau-\hat A$.
Define
\begin{equation}
Q(z;\tau):=q^{B_3(A)/3}\prod_{m=0}^\infty\frac{(1-xq^m)^{m+A}}{(1-x^{-1}q^{m+1})^{m+1-A}}
=\frac{q^{B_3(A)/3}\theta_0(z;\tau)^A}{\theta_1(z;\tau)},
\label{Qprod}
\end{equation}
where $B_3(t):=t^3-\frac32t^2+\frac12t$ is the third Bernoulli polynomial, $B_3(1-t)=-B_3(t)$, and
$$
F_+(z;\tau):=\ln Q(\hat z;\hat\tau)-\tau\ln Q(z;\tau),
\quad
F_-(z;\tau):=\ln\ol{Q(\hat z;\hat\tau)}-\tau\ln\ol{Q(z;\tau)}.
$$

It follows then from Lemma~\ref{lem1} and the parity relations \eqref{-z} that
\begin{align*}
\tau F_+(\hat z;\hat\tau)-F_+(z;\tau)
&=\tau\bigl(\ln Q(-z;\tau)+\ln Q(z;\tau)\bigr)
\\
&=\frac{2\pi i}3(B_3(-A)+B_3(A))\tau^2+2\pi iA z\tau-\pi i A\tau
\\
&=-\pi iA\bigl(2(A\tau-z)+1\bigr)\tau
=-\pi i A(2\hat A+1)\tau
\\ \intertext{and}
\tau F_-(\hat z;\hat\tau)-F_-(z;\tau)
&=\tau\bigl(\ln\ol{Q(-z;\tau)}+\ln\ol{Q(z;\tau)}\bigr)
\\
&=-\frac{2\pi i}3(B_3(-A)+B_3(A))\tau\ol\tau-2\pi iA\ol z\tau+\pi i A\tau
\\
&=\pi iA\bigl(2(A\ol\tau-\ol z)+1\bigr)\tau
=\pi iA(2\hat A+1)\tau.
\end{align*}
We summarise our finding in the following claim.

\begin{lemma}
\label{lem4}
We have
\begin{align*}
\tau F_+(\hat z;\hat\tau)-F_+(z;\tau)
&=-\pi i A(2\hat A+1)\tau,
\\
\tau F_-(\hat z;\hat\tau)-F_-(z;\tau)
&=\phantom +\pi iA(2\hat A+1)\tau.
\end{align*}
\end{lemma}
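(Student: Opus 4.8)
The plan is to observe that both $F_+$ and $F_-$ are period functions of weight $k=1$ in the sense of Section~\ref{sec2}: for $F_+$ take $f(z,\tau)=\ln Q(z;\tau)$, and for $F_-$ take $f(z,\tau)=\ln\ol{Q(z;\tau)}$. The proof of Lemma~\ref{lem1} uses nothing but the substitution rules for the $\tau$-involution, so it applies verbatim to the anti-holomorphic choice $f=\ln\ol Q$ as well. Thus Lemma~\ref{lem1} with $k=1$ reduces the claim to evaluating the parity defect
$$
\tau F_\pm(\hat z;\hat\tau)-F_\pm(z;\tau)=\tau\bigl(f(-z;\tau)+f(z;\tau)\bigr).
$$

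To compute this defect I would use the closed form on the right of \eqref{Qprod},
$$
\ln Q(z;\tau)=\frac{2\pi i\tau}3\,B_3(A)+A\ln\theta_0(z;\tau)-\ln\theta_1(z;\tau),
$$
together with two elementary inputs: the identity $A(-z,\tau)=-A$, immediate from the definition of $A$; and the parity relations \eqref{-z}, which give $\ln\theta_1(-z;\tau)=-\ln\theta_1(z;\tau)$ and $\ln\theta_0(-z;\tau)=\ln\theta_0(z;\tau)+\pi i(1-2z)$. Adding $\ln Q(-z;\tau)$ to $\ln Q(z;\tau)$, the $\theta_1$-contributions cancel, the $\theta_0$-contributions collapse to $-\pi iA(1-2z)$, and the Bernoulli terms combine via $B_3(-A)+B_3(A)=-3A^2$, which yields
$$
\ln Q(-z;\tau)+\ln Q(z;\tau)=-2\pi i\tau A^2+2\pi iAz-\pi iA.
$$
For $F_-$ one simply conjugates this relation, using that $A$ and $\hat A$ are real. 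Multiplying by $\tau$ and rewriting $A\tau-z=\hat A$ in the $F_+$ case, resp.\ $A\ol\tau-\ol z=\hat A$ in the $F_-$ case — both a consequence of $z=A\tau-\hat A$ and the reality of $A,\hat A$ — repackages the two expressions as $-\pi iA(2\hat A+1)\tau$ and $\pi iA(2\hat A+1)\tau$, which are the desired identities.

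I expect no structural obstacle. The one point demanding genuine care is the determination of the logarithm in $\ln\theta_0(-z;\tau)=\ln\theta_0(z;\tau)+\pi i(1-2z)$: it is precisely the constant $\pi i=\ln(-1)$ appearing here that produces the additive $+1$ inside $2\hat A+1$ in the conclusion, and a different branch would shift the right-hand sides by an integer multiple of $2\pi iA\tau$. Everything else is routine bookkeeping with the third Bernoulli polynomial and the linear substitution $z=A\tau-\hat A$.
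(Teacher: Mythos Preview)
Your proposal is correct and follows essentially the same route as the paper: apply Lemma~\ref{lem1} with $k=1$ to $f=\ln Q$ (resp.\ $f=\ln\ol Q$), then compute the parity defect $\ln Q(-z;\tau)+\ln Q(z;\tau)$ using the relations~\eqref{-z} and the Bernoulli identity $B_3(-A)+B_3(A)=-3A^2$, and finally rewrite via $A\tau-z=\hat A$ (resp.\ $A\ol\tau-\ol z=\hat A$). Your explicit decomposition through $\theta_0$ and $\theta_1$ and your remark on the branch of $\ln(-1)$ simply make visible the steps the paper compresses into one line.
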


Lemma~\ref{lem3} leads to the following expansions of the functions $F_+$ and $F_-$.

\begin{theorem}
\label{th1}
We have
\begin{align*}
F_+(z;\tau)
&=S(z,\tau)-\frac{1}{2\pi i}\,L(z,\tau),
\\
F_-(z;\tau)
&=-\frac{2\pi i\ol\tau(\tau-\ol\tau)}{3}B_3(A)+\ol{S(z,\tau)}+\frac1\pi\,\ol{U(z,\tau)}+\frac{1}{2\pi i}\,\ol{L(z,\tau)},
\end{align*}
where
\begin{align*}
L(z,\tau)
&:=\sum_{m=0}^\infty\bigl(\Li_2(x^{-1}q^{m+1})-\Li_2(x q^m)\bigr),
\\
U(z,\tau)
&:=\sum_{m=0}^\infty\bigl(\ln|x^{-1}q^{m+1}|\,\Li_1(x^{-1}q^{m+1})-\ln|xq^{m}|\,\Li_1(x q^{m})\bigr),
\\
S(z,\tau)
&:=\frac{-\pi i}{12}(2A-1)(6z^2-12 A\tau z+6z+8 A^2\tau^2-2 A\tau^2-6A\tau+1).
\end{align*}
\end{theorem}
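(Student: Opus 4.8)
The plan is to reduce both identities to the closed form
$\ln Q(z;\tau)=\frac{2\pi i\tau}{3}B_3(A)+A\ln\theta_0(z;\tau)-\ln\theta_1(z;\tau)$,
which is just the logarithm of the quotient displayed in \eqref{Qprod}, and then to feed in Lemma~\ref{lem3} together with the classical transformation \eqref{trans1}.

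For $F_+$: substituting the closed form into $F_+(z;\tau)=\ln Q(\hat z;\hat\tau)-\tau\ln Q(z;\tau)$, the two $\ln\theta_1$-terms combine into $\tau\ln\theta_1(z;\tau)-\ln\theta_1(\hat z;\hat\tau)=T(z;\tau)$, which by Lemma~\ref{lem3} equals $z\ln\theta_0(z;\tau)-\frac{1}{2\pi i}L(z,\tau)$ plus an explicit elementary term. The two $\ln\theta_0$-terms are $\hat A\ln\theta_0(\hat z;\hat\tau)-A\tau\ln\theta_0(z;\tau)$; taking the logarithm of \eqref{trans1} one has $\ln\theta_0(\hat z;\hat\tau)=\ln\theta_0(z;\tau)+(\text{explicit elementary function})$, and since $\hat A-A\tau=-z$ (which is just $z=A\tau-\hat A$ rearranged) the coefficient of $\ln\theta_0(z;\tau)$ collapses to $-z$, exactly cancelling the $+z\ln\theta_0(z;\tau)$ from Lemma~\ref{lem3}. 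What survives is $-\frac{1}{2\pi i}L(z,\tau)$ together with a polynomial built from $\frac{2\pi i}{3}\bigl(\hat\tau B_3(\hat A)-\tau^2B_3(A)\bigr)$, the cubic $\lambda$ of Lemma~\ref{lem3}, and the elementary term from \eqref{trans1}; one checks that the $1/\tau$-contributions cancel and the remainder equals $S(z,\tau)$. This last check is a finite polynomial identity: expand $B_3(A\tau-z)=(A\tau-z)^3-\tfrac32(A\tau-z)^2+\tfrac12(A\tau-z)$, collect coefficients of the monomials in $z$ and $\tau$, and see that the common factor $2A-1$ emerges.

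For $F_-$: since $F_+$ and $F_-$ are period functions of the same shape with $\ln Q$ replaced by $\overline{\ln Q}$, only the scalar prefactor $\tau$ versus $\bar\tau$ distinguishes them, so $F_-(z;\tau)=\overline{F_+(z;\tau)}+(\bar\tau-\tau)\,\overline{\ln Q(z;\tau)}$. Substituting $F_+=S-\frac{1}{2\pi i}L$ and using $\overline{1/(2\pi i)}=-1/(2\pi i)$ gives $\overline{F_+}=\overline S+\frac{1}{2\pi i}\overline L$, so it only remains to expand $\overline{\ln Q(z;\tau)}$. In $\ln Q=\frac{2\pi i\tau}{3}B_3(A)+\bigl(A\ln\theta_0(z;\tau)-\ln\theta_1(z;\tau)\bigr)$ the first summand produces the $B_3(A)$-term; for the second, the point is that the exponents in \eqref{Qprod} are exactly $m+A=-\frac{1}{2\pi\Im\tau}\ln|xq^m|$ and $m+1-A=-\frac{1}{2\pi\Im\tau}\ln|x^{-1}q^{m+1}|$ (because $\Im z=A\,\Im\tau$), whence
$A\ln\theta_0(z;\tau)-\ln\theta_1(z;\tau)=\sum_{m\ge0}\bigl((m+A)\ln(1-xq^m)-(m+1-A)\ln(1-x^{-1}q^{m+1})\bigr)$
equals $U(z,\tau)$ divided by the real quantity $\pi i(\tau-\bar\tau)=-2\pi\Im\tau$. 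Conjugating this relation, multiplying by $\bar\tau-\tau$, and combining with $\overline{F_+}$ yields the asserted expansion of $F_-$.

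The main obstacle is the polynomial identity at the end of the $F_+$ step — the bookkeeping of the cubic $B_3(A\tau-z)$ against the degree-three polynomial $\lambda$ and the transformation term, verifying the vanishing of all $1/\tau$-terms, and recognising the factorisation through $2A-1$. The $F_-$ step carries no additional analytic difficulty once the identity $A\ln\theta_0-\ln\theta_1=U/\bigl(\pi i(\tau-\bar\tau)\bigr)$ is in hand; one only has to track the conjugation and the powers of $i$ carefully. As in Lemma~\ref{lem3}, the expansions are first obtained in a domain where $0<A<1$ and $|\hat x|,|\hat x^{-1}\hat q|<1$, and then extended to all admissible $(z,\tau)$ by analytic continuation in~$z$.
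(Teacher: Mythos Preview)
Your argument is correct and follows essentially the same route as the paper's proof: for $F_+$ the paper also expands $\ln Q$ via \eqref{Qprod}, recognises the $\theta_1$-part as $T(z;\tau)$ and invokes Lemma~\ref{lem3}, then uses the logarithm of \eqref{trans1} to reduce the $\theta_0$-part (the coefficient of $\ln\theta_0(z;\tau)$ becoming $-\hat A$ after combining $-A\tau+z=-\hat A$, which is your cancellation reorganised), leaving exactly the polynomial identity you describe as ``elementary manipulation''; for $F_-$ the paper likewise uses $F_--\ol{F_+}=(\bar\tau-\tau)\,\ol{\ln Q(z;\tau)}$ and identifies the infinite-product part of $\ln Q$ with $U$ via $m+A=-\ln|xq^m|/(2\pi\Im\tau)$, precisely your $A\ln\theta_0-\ln\theta_1=U/\bigl(\pi i(\tau-\bar\tau)\bigr)$.
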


\begin{proof}
For $F_+$ substitute the expression of $T(z;\tau)$ from Lemma~\ref{lem3} into the computation
\begin{align*}
F_+(z;\tau)
&=\ln Q(\hat z;\hat\tau)-\tau\ln Q(z;\tau)
\\
&=\frac{2\pi i}3\bigl(B_3(\hat A)\hat\tau-B_3(A)\tau^2)+\hat A\ln\theta_0(\hat z;\hat\tau)-(\hat A+z)\ln\theta_0(z;\tau)
\\ &\qquad
+\tau\ln\theta_1(z;\tau)-\ln\theta_1(\hat z;\hat\tau).
\end{align*}
This leads to the formula
$$
F_+(z;\tau)
=S(z,\tau)-\frac{1}{2\pi i}\,L(z,\tau)
$$
with
\begin{align*}
S(z,\tau)
&=\frac{2\pi i}{3}\bigl(B_3(\hat A)\hat\tau-B_3(A)\tau^2\bigr)
+\hat A\pi i\biggl(\frac{\tau}{6} -\frac{\hat\tau}{6}+z\hat z-\frac{1}{2}-z+ \hat z\biggr)
\\ &\qquad
+\frac{\pi i}{12\tau}(\tau-2z)(1+2\tau z-2z^2),
\end{align*}
and the latter simplifies to the expression given in the statement of Theorem~\ref{th1} by elementary manipulation.

For $F_-$ we proceed as follows. We have
$$
\ln Q(z;\tau)=\frac{2\pi i\tau B_3(A)}{3}-\sum_{m=0}^\infty\bigl((m+1-A)\Li_1(x^{-1}q^{m+1})-(m+A)\Li_1(x q^{m})\bigr).
$$
Multiply this expression by $\tau-\ol\tau=2i\Im\tau$ and use $A(\tau-\ol\tau)=2i\Im z$ to get
$$
(\tau-\ol\tau)\ln Q(z;\tau)=\frac{2\pi i\tau(\tau-\ol\tau) B_3(A)}{3}-\frac1\pi\, U(z,\tau).
$$
Now, notice
$$
\ol{(\tau-\ol\tau)\ln Q(z;\tau)}=F_-(z;\tau)-\ol{F_+(z;\tau)}
$$
to deduce the expression for $F_-$ as in the theorem.
\end{proof}

A consequence of this expansion is the invariance of
$$
F(z;\tau):=\frac{F_+(z;\tau)+F_-(z;\tau)}2
=\ln|Q(\hat z;\hat\tau)|-\tau\ln|Q(z;\tau)|
$$
under translation $\tau\mapsto\tau+1$.

\begin{lemma}
\label{lem5}
We have
$$
F_+(z;\tau+1)-F_+(z;\tau)=-\bigl(F_-(z;\tau+1)-F_-(z;\tau)\bigr).
$$
\end{lemma}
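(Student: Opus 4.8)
The plan is to track how the two constituents $F_+$ and $F_-$ change under $\tau\mapsto\tau+1$ and show the changes are negatives of each other. First I would note that $F$ is manifestly built to be the real part, so the statement is equivalent to the assertion that $F(z;\tau)=\tfrac12(F_++F_-)$ is invariant under $\tau\mapsto\tau+1$; once that is established, applying it also shows $\operatorname{Im}$-type combination picks up whatever anomaly remains, but the cleanest route is to compute $F_+(z;\tau+1)-F_+(z;\tau)$ directly from the formula in Theorem~\ref{th1} and compare with the conjugate expression. The key observation is that $A(z,\tau)=\frac{z-\ol z}{\tau-\ol\tau}$ is \emph{not} invariant under $\tau\mapsto\tau+1$ unless $z$ is also shifted; rather, with $z$ fixed we have $A(z,\tau+1)=\frac{z-\ol z}{\tau+1-\ol\tau-1}=A(z,\tau)$, so in fact $A$ \emph{is} invariant, and likewise $\hat A(z,\tau+1)=\frac{z(\ol\tau+1)-\ol z(\tau+1)}{\tau-\ol\tau}=\hat A(z,\tau)+A(z,\tau)$. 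This single shift $\hat A\mapsto\hat A+A$ is what drives everything.

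With $A$ fixed and $\hat A$ shifted, I would examine each summand of $F_+$ from Theorem~\ref{th1}. The sum $L(z,\tau)=\sum_m(\Li_2(x^{-1}q^{m+1})-\Li_2(xq^m))$ depends on $\tau$ only through $q=e^{2\pi i\tau}$, which is invariant under $\tau\mapsto\tau+1$ (and $x=e^{2\pi iz}$ is untouched since $z$ is fixed), so $L(z,\tau+1)=L(z,\tau)$; the same applies to $U(z,\tau)$. Hence the only contribution to $F_+(z;\tau+1)-F_+(z;\tau)$ comes from the polynomial piece $S(z,\tau)$, and to $F_-(z;\tau+1)-F_-(z;\tau)$ from $-\frac{2\pi i\ol\tau(\tau-\ol\tau)}{3}B_3(A)+\ol{S(z,\tau)}$. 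So the identity reduces to the purely polynomial statement
\begin{equation*}
\bigl(S(z,\tau+1)-S(z,\tau)\bigr)
=-\,\ol{\bigl(S(z,\tau+1)-S(z,\tau)\bigr)}
+\frac{2\pi i}{3}\Bigl(\ol{(\tau+1)}(\tau+1-\ol\tau-1)-\ol\tau(\tau-\ol\tau)\Bigr)B_3(A),
\end{equation*}
and since $B_3(A)$ is real and the bracket $\ol{(\tau+1)}(\tau-\ol\tau)-\ol\tau(\tau-\ol\tau)=(\tau-\ol\tau)$ is purely imaginary, this becomes the clean requirement that $S(z,\tau+1)-S(z,\tau)+\ol{S(z,\tau+1)-S(z,\tau)}=\frac{2\pi i(\tau-\ol\tau)}{3}B_3(A)$, i.e.\ $2\operatorname{Re}\bigl(S(z,\tau+1)-S(z,\tau)\bigr)=\frac{2\pi i(\tau-\ol\tau)}{3}B_3(A)$, whose right side is real since $\tau-\ol\tau\in i\mathbb R$.

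At this point the proof is a finite computation: substitute $\tau\mapsto\tau+1$ into $S(z,\tau)=\frac{-\pi i}{12}(2A-1)(6z^2-12A\tau z+6z+8A^2\tau^2-2A\tau^2-6A\tau+1)$ keeping $A$ fixed, subtract, and collect terms. The difference $S(z,\tau+1)-S(z,\tau)$ equals $\frac{-\pi i}{12}(2A-1)$ times $(-12Az)+(16A^2\tau+8A^2-2A\cdot(2\tau+1)-6A)= (-12Az)+(16A^2\tau+8A^2-4A\tau-2A-6A)$, which after simplification is a polynomial in $\tau$ with coefficients in $\mathbb R[A,z,\ol z]$ up to the overall factor $\pi i$. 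The main obstacle — really the only one — is bookkeeping: one must be careful that $z$ is held fixed (not $A$-and-$z$ simultaneously in some inconsistent way) while $\hat A$, $\ol\tau$ genuinely move, and then verify that $2\operatorname{Re}$ of the resulting expression matches $\frac{2\pi i(\tau-\ol\tau)}{3}B_3(A)$ on the nose. I expect the cleanest presentation is to observe directly that $S(z,\tau)+\ol{S(z,\tau)}-(-\frac{2\pi i\ol\tau(\tau-\ol\tau)}{3}B_3(A))$ is, by Theorem~\ref{th1}, exactly $2F(z;\tau)$ expressed through $L$, $U$ (which are $\tau\mapsto\tau+1$ invariant) plus $\ol{U}$, so that invariance of $F$ under $\tau\mapsto\tau+1$ is immediate from $q$-periodicity and the lemma follows by taking real and imaginary parts; I would present that shortcut rather than grinding the polynomial identity, relegating the explicit cancellation to a one-line "by direct computation" remark.
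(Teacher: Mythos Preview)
Your approach is correct and is essentially the same as the paper's: both observe that $L(z,\tau)$ and $U(z,\tau)$ (hence their conjugates) are invariant under $\tau\mapsto\tau+1$ because they depend on $\tau$ only through $q$, and then reduce the lemma to a polynomial identity involving $S$ and the $B_3(A)$ term. The only difference is packaging: instead of computing the increment $S(z,\tau+1)-S(z,\tau)$ and matching it against $\Delta\bigl(\frac{2\pi i\ol\tau(\tau-\ol\tau)}{3}B_3(A)\bigr)$ as you outline, the paper simplifies $2\Re S(z,\tau)+\frac{2\pi i\ol\tau(\tau-\ol\tau)}{3}B_3(A)$ directly to the closed form $\frac{-\pi i(\tau-\ol\tau)^2}{3}B_3(A)$, which visibly depends only on $\tau-\ol\tau$ and is therefore translation-invariant.

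One cautionary remark: your final ``shortcut'' paragraph reads as if $q$-periodicity alone settles invariance of $F$, but of course it only handles the $L$, $\ol L$, $\ol U$ pieces\,---\,the polynomial check is still required, as you acknowledge with the ``by direct computation'' remark. Just make sure the write-up does not obscure this; the paper's route (exhibiting the closed form above) is the cleanest way to discharge that obligation in one line.
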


\begin{proof}
The functions $L(z,\tau)$ and $U(z,\tau)$ (hence their complex conjugates) are clearly invariant under translation $\tau\mapsto\tau+1$.
The result follows from noticing that
\begin{align*}
2 \Re S(z,\tau)+ \frac{2\pi i\ol\tau(\tau-\ol\tau)}{3}\,B_3(A)
&=\frac{-\pi i (\tau-\ol\tau)^2 A(1-A)(1-2A)}{6}
\\
&=\frac{-\pi i (\tau-\ol\tau)^2}3\,B_3(A)
\end{align*}
is also invariant under the transformation.
\end{proof}

We summarise the results in this section as follows.

\begin{theorem}
\label{th2}
The weight $1$ period function
\begin{align*}
F(z;\tau)
&=\ln|Q(\hat z;\hat\tau)|-\tau\ln|Q(z;\tau)|
\\
&=\frac1{2\pi}\sum_{m=0}^\infty\bigl(\ln|x^{-1}q^{m+1}|\,\ol{\Li_1(x^{-1}q^{m+1})}-\ln|xq^{m}|\,\ol{\Li_1(x q^{m})}\,\bigr)
\\ &\qquad
-\frac{\pi i(\tau-\ol\tau)^2}6\,B_3(A)
-\frac1{2\pi i}\,\Im\sum_{m=0}^\infty\bigl(\Li_2(x^{-1}q^{m+1})-\Li_2(x q^m)\bigr)
\end{align*}
of $\ln|Q(z;\tau)|$ satisfies
$$
\tau F(\hat z;\hat\tau)=F(z;\tau)
\quad\text{and}\quad
F(z;\tau)=F(z;\tau+1).
$$
In other words, it behaves like a Jacobi form of weight $1$ on $\operatorname{SL}_2(\mathbb Z)$.
\end{theorem}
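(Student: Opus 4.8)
The plan is to obtain everything from the three facts already in place, since by construction $F=\tfrac12\bigl(F_+(z;\tau)+F_-(z;\tau)\bigr)$, as already noted just before the statement: the two functional equations will fall out of Lemma~\ref{lem4} and Lemma~\ref{lem5}, and the closed series form will come from averaging the two formulas of Theorem~\ref{th1}.

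For the behaviour under $(z,\tau)\mapsto(\hat z,\hat\tau)$ I would simply add the two identities of Lemma~\ref{lem4}: their right-hand sides $-\pi iA(2\hat A+1)\tau$ and $+\pi iA(2\hat A+1)\tau$ cancel, leaving $\tau\bigl(F_+(\hat z;\hat\tau)+F_-(\hat z;\hat\tau)\bigr)=F_+(z;\tau)+F_-(z;\tau)$, i.e.\ $\tau F(\hat z;\hat\tau)=F(z;\tau)$. For invariance under $\tau\mapsto\tau+1$ I would add the two sides of the relation in Lemma~\ref{lem5}, which says precisely $F_+(z;\tau+1)+F_-(z;\tau+1)=F_+(z;\tau)+F_-(z;\tau)$, hence $F(z;\tau+1)=F(z;\tau)$. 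The closing sentence of the theorem is merely a reformulation of these two relations, $\tau\mapsto-1/\tau$ and $\tau\mapsto\tau+1$ generating $\operatorname{SL}_2(\mathbb Z)$ and the first relation already carrying the weight-$1$ automorphy factor inside the substitution $z\mapsto\hat z$, so no separate argument is needed there.

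It remains to produce the series representation, for which I would take the half-sum of the expressions for $F_+$ and $F_-$ in Theorem~\ref{th1}. The pieces $S(z,\tau)$ and $\overline{S(z,\tau)}$ merge into $2\Re S(z,\tau)$; the term $\tfrac1\pi\overline{U(z,\tau)}$ halves into the first displayed sum of the statement, the logarithmic weights $\ln|x^{-1}q^{m+1}|,\ln|xq^m|$ being real so that only the $\Li_1$-factors are conjugated; and the pair $-\tfrac1{2\pi i}L(z,\tau),\ +\tfrac1{2\pi i}\overline{L(z,\tau)}$ collapses into the imaginary part of $L(z,\tau)$, giving the last displayed sum. The only non-formal simplification is the Bernoulli contribution: one has to combine $2\Re S(z,\tau)$ with the explicit $B_3(A)$ term of $F_-$, and this is exactly the algebraic identity
\[
2\Re S(z,\tau)+\frac{2\pi i\,\ol\tau(\tau-\ol\tau)}{3}\,B_3(A)=-\frac{\pi i(\tau-\ol\tau)^2}{3}\,B_3(A)
\]
established inside the proof of Lemma~\ref{lem5}; dividing by $2$ yields the term $-\tfrac{\pi i(\tau-\ol\tau)^2}{6}B_3(A)$ of the statement.

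I expect no genuine obstacle, since all the inputs are already proved. The one point that demands care is the bookkeeping in this last step: one must keep track of which quantities are real ($A$ and $B_3(A)$, every $\ln|\cdot|$ factor, and $\tau-\ol\tau$, which is purely imaginary) and handle the complex conjugations consistently when combining the $L$- and $U$-terms coming from $F_+$ and from $F_-$; beyond that it is formal substitution and the elementary identity quoted above.
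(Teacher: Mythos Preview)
Your proposal is correct and matches the paper's approach exactly: the theorem is stated there as a summary of the section, with the series form obtained by averaging the two expressions in Theorem~\ref{th1} (using the identity from the proof of Lemma~\ref{lem5} for the Bernoulli piece), and the two transformation laws read off from Lemmas~\ref{lem4} and~\ref{lem5} just as you describe.
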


\section{Elliptic dilogarithm}
\label{sec4}

Theorem~\ref{th2} provides a natural link between the period function $F(z;\tau)$ and the elliptic dilogarithm \cite{Za90}
$$
D(q;x):=\sum_{m\in\mathbb Z}D(xq^m)=\sum_{m=0}^\infty\bigl(D(xq^m)-D(x^{-1}q^{m+1})\bigr)
$$
together with its companion
$$
J(q;x):=\sum_{m=0}^\infty\bigl(J(xq^m)-J(x^{-1}q^{m+1})\bigr)+\frac{\log^2|q|}3\,B_3\biggl(\frac{\log|x|}{\log|q|}\biggr),
$$
where
$$
D(x):=\ln|x|\,\arg(1-x)+\Im\Li_2(x)
=-\ln|x|\,\Im\Li_1(x)+\Im\Li_2(x)
$$
denotes the Bloch--Wigner dilogarithm and
$$
J(x):=\ln|x|\,\ln|1-x|=-\ln|x|\,\Re\Li_1(x)
$$
its companion. Namely, the expansion in the theorem can be stated as
\begin{equation}
F(z;\tau)=\frac1{2\pi i}\bigl(D(q;x)+iJ(q;x)\bigr).
\label{Fdi}
\end{equation}
This is essentially the result discussed in \cite[Section 1]{DI07}.

Viewing now $z$ as an element of the lattice $\mathbb R+\mathbb R\tau$,
so that $A$ and $\hat A$ in the representation $z=-\hat A+A\tau$ are fixed,
we find out that the $\tau$-derivative
$$
\frac1{2\pi i}\,\frac{\d}{\d\tau}\ln Q(z;\tau)
=q\frac{\d}{\d q}\ln Q(z;\tau)
$$
is the Eisenstein series
$$
\frac i{4\pi^3}\sideset{}{'}\sum_{m,n\in\mathbb Z}\frac{e^{2\pi i(m\hat A+nA)}}{(m\tau+n)^3}
$$
of weight 3, where the notation $\sum'$ indicates omitting the term $m=n=0$ from the summation.
Integrating we obtain
$$
\ln Q(z;\tau)
=\frac1{4\pi^2}\sideset{}{'}\sum_{m,n\in\mathbb Z}\frac{e^{2\pi i(m\hat A+nA)}}{m(m\tau+n)^2}
$$
implying
\begin{align*}
\ln|Q(z;\tau)|
&=\frac12\bigl(\ln Q(z;\tau)+\ln\ol{Q(z;\tau)}\,\bigr)
\\
&=\frac1{8\pi^2}\sideset{}{'}\sum_{m,n\in\mathbb Z}e^{2\pi i(m\hat A+nA)}\biggl(\frac1{m(m\tau+n)^2}-\frac1{m(m\ol\tau+n)^2}\biggr)
\\
&=\frac1{2\pi^2}\sideset{}{'}\sum_{m,n\in\mathbb Z}e^{2\pi i(m\hat A+nA)}\frac{i\,m\Im\tau\,(m\Re\tau+n)}{m(m\tau+n)^2(m\ol\tau+n)^2}
\\
&=\frac{i\,\Im\tau}{2\pi^2}\sideset{}{'}\sum_{m,n\in\mathbb Z}\frac{e^{2\pi i(m\hat A+nA)}(m\Re\tau+n)}{|m\tau+n|^4}.
\end{align*}
This is equation (7) in \cite{DI07}.
Since $\hat z=z/\tau=A-\hat A/\tau=A+\hat A\hat\tau$, it follows that
\begin{align*}
\ln|Q(\hat z;\hat\tau)|
&=\frac{i\,\Im\hat\tau}{2\pi^2}\sideset{}{'}\sum_{m,n\in\mathbb Z}\frac{e^{2\pi i(-mA+n\hat A)}(m\Re\hat\tau+n)}{|m\hat\tau+n|^4}
\displaybreak[2]\\
&=\frac{i\,\Im\tau}{2\pi^2|\tau|^2}\sideset{}{'}\sum_{m,n\in\mathbb Z}\frac{e^{2\pi i(n\hat A-mA)}(-m(\Re\tau)/|\tau|^2+n)}{|n-m/\tau|^4}
\displaybreak[2]\\
&=\frac{i\,\Im\tau}{2\pi^2}\sideset{}{'}\sum_{m,n\in\mathbb Z}\frac{e^{2\pi i(n\hat A-mA)}(n|\tau|^2-m\Re\tau)}{|n\tau-m|^4}
\displaybreak[2]\\
&=\frac{i\,\Im\tau}{2\pi^2}\sideset{}{'}\sum_{m,n\in\mathbb Z}\frac{e^{2\pi i(m\hat A+nA)}(m|\tau|^2+n\Re\tau)}{|m\tau+n|^4}
\displaybreak[2]\\
&=\frac{\Im\tau}{2\pi^2}\sideset{}{'}\sum_{m,n\in\mathbb Z}\frac{e^{2\pi i(m\hat A+nA)}((m\Re\tau+n)\tau i+(m\tau+n)\Im\tau)}{|m\tau+n|^4}
\end{align*}
implying
$$
\ln|Q(\hat z;\hat\tau)|-\tau\,\ln|Q(z;\tau)|
=\frac{(\Im\tau)^2}{2\pi^2}\sideset{}{'}\sum_{m,n\in\mathbb Z}\frac{e^{2\pi i(m\hat A+nA)}(m\tau+n)}{|m\tau+n|^4}.
$$
The latter is a (non-holomorphic) modular form of weight $1$, and combined with equation \eqref{Fdi}, is the formula of Bloch
mentioned previously.

\begin{theorem}[Bloch's formula {\cite{Bl77,DI07,Za90}}]
\label{th3}
For $z=A\tau-\hat A$, we have
\begin{align*}
F(z;\tau)
&=\frac1{2\pi i}\bigl(D(q;x)+iJ(q;x)\bigr)
\\
&=\frac{(\Im\tau)^2}{2\pi^2}\sideset{}{'}\sum_{m,n\in\mathbb Z}\frac{e^{2\pi i(m\hat A+nA)}(m\tau+n)}{|m\tau+n|^4}.
\end{align*}
\end{theorem}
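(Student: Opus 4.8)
The plan is to assemble, in the right order, the computations already made in Sections~\ref{sec3}--\ref{sec4}; no new construction is needed, and the only genuinely analytic content lies in the Eisenstein-series manipulation, which I describe below.

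First, the equality $F(z;\tau)=\frac1{2\pi i}\bigl(D(q;x)+iJ(q;x)\bigr)$ is just \eqref{Fdi}. It is read off from the expansion of $F(z;\tau)$ in Theorem~\ref{th2} by rewriting the three displayed sums there in terms of the Bloch--Wigner dilogarithm $D(x)=-\ln|x|\,\Im\Li_1(x)+\Im\Li_2(x)$ and its companion $J(x)=-\ln|x|\,\Re\Li_1(x)$: the $\ol{\Li_1}$-sum and the $\Im\Li_2$-sum in Theorem~\ref{th2} combine, term by term in $m$, into $\frac1{2\pi i}D(q;x)$, and the $B_3(A)$-term matches the $B_3$-correction built into the definition of $J(q;x)$. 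The only care required is tracking the index shift $m\mapsto m+1$ distinguishing the arguments $xq^m$ and $x^{-1}q^{m+1}$, and keeping the signs straight. So for the first equality nothing beyond bookkeeping enters.

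For the second equality one works through $\ln Q(z;\tau)$, viewed as a function of $\tau$ with $A,\hat A$ fixed by $z=A\tau-\hat A$. Differentiating the product \eqref{Qprod} and expanding each $\Li_1$ as a geometric series, one identifies $q\,\d/\d q\,\ln Q(z;\tau)$ with the weight-$3$ Eisenstein series $\frac i{4\pi^3}\sideset{}{'}\sum_{m,n}e^{2\pi i(m\hat A+nA)}/(m\tau+n)^3$ by the Lipschitz (Kronecker) summation formula, and then integrates in $\tau$ to obtain $\ln Q(z;\tau)=\frac1{4\pi^2}\sideset{}{'}\sum_{m,n}e^{2\pi i(m\hat A+nA)}/(m(m\tau+n)^2)$, the sum being over $m\neq0$. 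Averaging with the complex conjugate and combining over the common denominator $|m\tau+n|^4$ gives $\ln|Q(z;\tau)|=\frac{i\,\Im\tau}{2\pi^2}\sideset{}{'}\sum\frac{e^{2\pi i(m\hat A+nA)}(m\Re\tau+n)}{|m\tau+n|^4}$. Specialising this to $(\hat z,\hat\tau)$ — so that $(A,\hat A)\mapsto(\hat A,-A)$ and $\tau\mapsto-1/\tau$ — and reindexing the lattice by the area-preserving rotation $(m,n)\mapsto(n,-m)$ while clearing powers of $\tau$ from $|m\hat\tau+n|=|n\tau-m|/|\tau|$ rewrites $\ln|Q(\hat z;\hat\tau)|$ as a sum over the same lattice; forming $F(z;\tau)=\ln|Q(\hat z;\hat\tau)|-\tau\ln|Q(z;\tau)|$ then cancels the $\Re\tau$-part of the numerator and leaves $\frac{(\Im\tau)^2}{2\pi^2}\sideset{}{'}\sum\frac{e^{2\pi i(m\hat A+nA)}(m\tau+n)}{|m\tau+n|^4}$. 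Combined with \eqref{Fdi}, this is the assertion.

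The step I expect to cost the most is the identification of $q\,\d/\d q\,\ln Q$ with the weight-$3$ Eisenstein series together with its integration back to $\ln Q$: the relevant lattice sums must be read in the Eisenstein order of summation, the interchange of $\d/\d\tau$ with the sum over $m$ needs justification, and the value of the constant of integration — equivalently, the role of the omitted $m=0$ terms and of the $B_3(A)$-term in \eqref{Qprod} — is not automatic and has to be fixed either from the asymptotics of the infinite product as $\Im\tau\to+\infty$ or from the classical Fourier series of the Bernoulli polynomial $B_3$. Once $\ln Q$ is in hand as this Kronecker-type series, the remaining passage to $\ln|Q|$, the specialisation to $(\hat z,\hat\tau)$, and the lattice reindexing are routine, and the theorem follows.
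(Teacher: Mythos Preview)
Your proposal is correct and follows essentially the same route as the paper: the first equality is exactly \eqref{Fdi} obtained by regrouping the expansion in Theorem~\ref{th2}, and the second equality is derived just as in Section~\ref{sec4} by identifying $q\,\d/\d q\,\ln Q$ with the weight-$3$ Eisenstein series, integrating, taking the real part, specialising to $(\hat z,\hat\tau)$ with the lattice reindexing $(m,n)\mapsto(n,-m)$, and subtracting. You have also correctly flagged the one genuinely delicate point, namely fixing the constant of integration (equivalently, the role of the $B_3(A)$ normalisation and the $m=0$ terms), which the paper likewise leaves to the reader.
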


\section{General weight}
\label{sec5}

A natural generalisation of the product in \eqref{Qprod} is 
\begin{equation}
Q_k(z;\tau):=q^{B_{k+2}(A)/(k+2)}\prod_{m=0}^\infty(1-xq^m)^{(m+A)^k}(1-x^{-1}q^{m+1})^{(-1)^k(m+1-A)^k},
\label{Rprod}
\end{equation}
where $k=0,1,2,\dots$ and $B_k(t)$ stands for the $k$th Bernoulli polynomial. Then $Q_0(z;\tau)$ is an arithmetic normalisation
of the short theta-function $\theta_0(z;\tau)$ (a Siegel modular unit) and $Q_1(z;\tau)$ coincides with \eqref{Qprod}.
Following the earlier recipe, define
\begin{align*}
F_+(z;\tau)&=F_{k,+}(z;\tau):=\ln Q_k(\hat z;\hat\tau)-\tau^{k-2}\ln Q_k(z;\tau),
\\
F_-(z;\tau)&=F_{k,-}(z;\tau):=\ln\ol{Q_k(\hat z;\hat\tau)}-\tau^{k-2}\ln\ol{Q_k(z;\tau)}
\end{align*}
and $F_k(z;\tau):=\frac12\bigl(F_{k,+}(z;\tau)+F_{k,-}(z;\tau)\bigr)$.
Then from Lemma~\ref{lem1} we deduce the following generalisation of Lemma~\ref{lem4}.

\begin{lemma}
\label{lem6}
We have, for $k\ge1$,
\begin{align*}
\tau^kF_+(\hat z;\hat\tau)+(-1)^kF_+(z;\tau)
&=\phantom+(-1)^k \pi iA^k(2\hat A+1)\tau^k,
\\
\tau^kF_-(\hat z;\hat\tau)+(-1)^kF_-(z;\tau)
&=-(-1)^k \pi iA^k(2\hat A+1)\tau^k.
\end{align*}
\end{lemma}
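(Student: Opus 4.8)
The plan is to deduce Lemma~\ref{lem6} from Lemma~\ref{lem1} in exactly the way Lemma~\ref{lem4} was obtained, the single new ingredient being the $k$-parity defect of $\ln Q_k$. Regard $F_{k,+}(z;\tau)$ as the weight~$k$ period function attached to $f(z,\tau):=\ln Q_k(z;\tau)$ and $F_{k,-}(z;\tau)$ as the one attached to $\ln\ol{Q_k(z;\tau)}=\ol{f(z,\tau)}$. By Lemma~\ref{lem1},
$$
\tau^kF_{k,+}(\hat z;\hat\tau)+(-1)^kF_{k,+}(z;\tau)
=\tau^k\bigl(\ln Q_k(-z;\tau)-(-1)^k\ln Q_k(z;\tau)\bigr),
$$
and for $F_{k,-}$ the same with the parenthesised right-hand side replaced by its complex conjugate. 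Since $A,\hat A\in\mathbb R$, the $F_{k,-}$ assertion then follows from the $F_{k,+}$ one by conjugation, so the lemma reduces to the single identity
$$
\ln Q_k(-z;\tau)-(-1)^k\ln Q_k(z;\tau)=(-1)^k\pi iA^k(2\hat A+1).
$$

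To establish it I would substitute $z\mapsto-z$ in the product \eqref{Rprod}, which replaces $x$ by $x^{-1}$ and $A$ by $-A$. After reindexing each of the two families of factors (shifting $m$ up in one, down in the other) they reorganise into $Q_k(z;\tau)^{(-1)^k}$ multiplied by the finite correction coming from the terms $m=0$, namely
$$
(1-x^{-1})^{(-1)^kA^k}(1-x)^{-(-1)^kA^k}=(-x^{-1})^{(-1)^kA^k},
$$
where I used $\frac{1-x^{-1}}{1-x}=-x^{-1}$. The exponential prefactor contributes $q^{(B_{k+2}(-A)-(-1)^kB_{k+2}(A))/(k+2)}$, and here the two Bernoulli-polynomial identities $B_n(1-t)=(-1)^nB_n(t)$ and $B_n(t+1)-B_n(t)=nt^{n-1}$ enter: the first rewrites $(-1)^kB_{k+2}(A)=(-1)^{k+2}B_{k+2}(A)=B_{k+2}(1-A)$, and the second gives $B_{k+2}(1-A)-B_{k+2}(-A)=(k+2)(-A)^{k+1}$, so that the exponent collapses to $(-1)^kA^{k+1}$. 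Altogether one obtains the clean identity
$$
Q_k(-z;\tau)=Q_k(z;\tau)^{(-1)^k}\,q^{(-1)^kA^{k+1}}\,(-x^{-1})^{(-1)^kA^k}.
$$

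Taking logarithms with $\ln q=2\pi i\tau$ and the branch $\ln(-x^{-1})=\pi i-2\pi iz$ turns the right-hand side into
$$
(-1)^k\bigl(2\pi iA^{k+1}\tau+A^k(\pi i-2\pi iz)\bigr)=(-1)^k\pi iA^k\bigl(2(A\tau-z)+1\bigr)=(-1)^k\pi iA^k(2\hat A+1),
$$
the last equality because $z=A\tau-\hat A$. This is the required identity; feeding it back into the two consequences of Lemma~\ref{lem1} above yields Lemma~\ref{lem6}, and for $k=1$ the whole argument specialises to the computation preceding Lemma~\ref{lem4}.

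I expect the only delicate point to be the middle step: correctly matching the two reindexed product families against $Q_k(z;\tau)^{(-1)^k}$ and pinning down the finitely many boundary factors, together with the bookkeeping of the two Bernoulli-polynomial identities. Conceptually there is nothing new here — it is the $(m+A)^k$-analogue of the manipulation carried out for $\theta_0^A/\theta_1$ in Section~\ref{sec3}, and convergence of the infinite products (geometric decay of $q^m$ beating the polynomial growth of $(m+A)^k$) is unaffected. The restriction $k\ge1$ is needed only to keep the exponents at $m=0$ unambiguous; $k=0$ corresponds to the classical transformation of $\theta_0$.
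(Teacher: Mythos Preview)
Your proof is correct and follows the paper's approach: apply Lemma~\ref{lem1} with weight~$k$ and compute the $k$-parity defect $\ln Q_k(-z;\tau)-(-1)^k\ln Q_k(z;\tau)$. The paper's proof is a one-liner citing the single Bernoulli identity $B_{k+2}(-t)-(-1)^kB_{k+2}(t)=(-1)^k(k+2)t^{k+1}$ and leaving the product reindexing implicit (by analogy with the $k=1$ computation preceding Lemma~\ref{lem4}); you have spelled out both that reindexing and derived the needed Bernoulli relation from the pair $B_n(1-t)=(-1)^nB_n(t)$, $B_n(t+1)-B_n(t)=nt^{n-1}$, which is equivalent.
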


\begin{proof}
Apply Lemma \ref{lem1} and the relation
\begin{equation*}
B_{k+2}(-t)-(-1)^k B_{k+2}(t)=(-1)^k(k+2)t^{k+1}.
\qedhere
\end{equation*}
\end{proof}

We further use that the $\tau$-derivative of $\ln Q_k(z;\tau)$ is an Eisenstein series.

\begin{lemma}
\label{lem7}
For $k\ge1$,
$$
\ln Q_k(z;\tau)=\frac{(-1)^kk!}{(2\pi i)^{k+1}}\sideset{}{'}\sum_{m,n\in\mathbb Z}\frac{e^{2\pi i(m\hat A+nA)}}{m(m\tau+n)^{k+1}},
$$
where $z=-\hat A+A\tau$.
\end{lemma}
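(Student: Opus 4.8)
The plan is to reproduce, for arbitrary $k\ge1$, the reasoning already carried out for $k=1$ in Section~\ref{sec4}: differentiate $\ln Q_k(z;\tau)$ in~$\tau$ with the lattice coordinates $A,\hat A$ of $z=-\hat A+A\tau$ held fixed, identify the derivative with a non-holomorphic Eisenstein series of weight $k+2$, and then integrate back in~$\tau$. Throughout one may assume $0<A<1$, the general case following by analytic continuation in~$z$ as in the proof of Lemma~\ref{lem3}; the hypothesis $k\ge1$ is precisely what makes the double series below absolutely convergent and lets one apply the Lipschitz summation formula in elementary form (the borderline case $k=0$ being the theta-function~$Q_0$, which is handled separately).

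First I would differentiate \eqref{Rprod} term by term. Writing $xq^m=e^{2\pi i((m+A)\tau-\hat A)}$ and $x^{-1}q^{m+1}=e^{2\pi i((m+1-A)\tau+\hat A)}$, differentiation replaces $\ln(1-xq^m)$ and $\ln(1-x^{-1}q^{m+1})$ by $-xq^m/(1-xq^m)$ and $-x^{-1}q^{m+1}/(1-x^{-1}q^{m+1})$ and promotes the exponents $(m+A)^k$, $(-1)^k(m+1-A)^k$ to $(m+A)^{k+1}$, $(-1)^k(m+1-A)^{k+1}$, while the prefactor $q^{B_{k+2}(A)/(k+2)}$ contributes the additive constant $2\pi iB_{k+2}(A)/(k+2)$. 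Expanding each geometric series $w/(1-w)=\sum_{j\ge1}w^j$ then produces an explicit, absolutely convergent $q$-expansion of $\frac1{2\pi i}\frac{\d}{\d\tau}\ln Q_k(z;\tau)$.

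The substance of the proof is the next step: recognising this $q$-expansion as the weight-$(k+2)$ Eisenstein series
$$
\frac{(-1)^{k+1}(k+1)!}{(2\pi i)^{k+2}}\sideset{}{'}\sum_{m,n\in\mathbb Z}\frac{e^{2\pi i(m\hat A+nA)}}{(m\tau+n)^{k+2}}.
$$
For this I would apply the shifted Lipschitz summation formula $\sum_{n\in\mathbb Z}\frac{e^{2\pi inA}}{(w+n)^s}=\frac{(-2\pi i)^s}{(s-1)!}\sum_{j\ge0}(j+A)^{s-1}e^{2\pi i(j+A)w}$ (valid for $\Im w>0$, integer $s\ge2$, $0<A<1$): to the $m\ge1$ part of the Eisenstein series with $w=m\tau$, to the $m\le-1$ part after the substitution $m\mapsto-m$, $A\mapsto1-A$, and — for the $m=0$ summand — the Fourier expansion $\sum_{n\ne0}e^{2\pi inA}/n^{k+2}=-(2\pi i)^{k+2}B_{k+2}(A)/(k+2)!$ of the Bernoulli polynomial, which accounts for the constant term obtained in the first step. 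For $k=1$ this is exactly the identification invoked without proof in Section~\ref{sec4}. I expect this to be the main obstacle: not because any single ingredient is hard, but because lining up the powers of $2\pi i$, the factorials, and all the signs (in particular those produced by the reflection $m\mapsto-m$) demands careful bookkeeping.

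Finally I would integrate in~$\tau$. Since $\frac{\d}{\d\tau}(m\tau+n)^{-(k+1)}=-(k+1)m\,(m\tau+n)^{-(k+2)}$ for $m\ne0$, an antiderivative of the above Eisenstein series is $\frac{(-1)^kk!}{(2\pi i)^{k+1}}\sideset{}{'}\sum_{m,n}\frac{e^{2\pi i(m\hat A+nA)}}{m(m\tau+n)^{k+1}}$, the $m=0$ constant of the derivative integrating into the $2\pi i\tau B_{k+2}(A)/(k+2)$ carried by the prefactor $q^{B_{k+2}(A)/(k+2)}$ of \eqref{Rprod}; so $\ln Q_k(z;\tau)$ equals the claimed expression up to an additive function of~$z$ alone. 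To see this function vanishes, let $\Im\tau\to+\infty$ with $A,\hat A$ fixed: once the common growth $2\pi i\tau B_{k+2}(A)/(k+2)$ is stripped off, the rest of $\ln Q_k$ is the series over $m\ge0$ in \eqref{Rprod}, which is $O(q^{\min(A,1-A)})\to0$, and likewise every term with $m\ne0$ in $\sideset{}{'}\sum\frac{e^{2\pi i(m\hat A+nA)}}{m(m\tau+n)^{k+1}}$ is $O(q^{|m|\min(A,1-A)})\to0$. The two sides thus have the same limit, the constant of integration is zero, and analytic continuation in~$z$ removes the restriction $0<A<1$.
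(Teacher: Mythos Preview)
Your plan is sound and follows the same skeleton as the paper's proof: differentiate $\ln Q_k$ in $\tau$ with $A,\hat A$ frozen, identify the derivative as the weight-$(k+2)$ Eisenstein series, then integrate back and pin down the constant. You are also more explicit than the paper on two points the authors treat tersely: the r\^ole of the $m=0$ part (which you correctly split off as the Bernoulli constant, integrating to the prefactor $2\pi i\tau B_{k+2}(A)/(k+2)$) and the determination of the integration constant by sending $\Im\tau\to\infty$.

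The one substantive difference is in how the identification $G_{k+2}=E_{k+2}$ is established. You propose to do it for each $k$ directly, by Lipschitz summation on the inner sum over $n$ (treating $m\ge1$, $m\le-1$, $m=0$ separately) and matching the resulting $q$-expansion against the one obtained from the product. The paper instead bootstraps from the case $k=1$ already dealt with in Section~\ref{sec4}: it observes the differential relation $\partial_{\hat A}E_{k+3}=\partial_\tau E_{k+2}$, checks that the analogous relation holds for the $G$'s coming from the products, and concludes by induction on~$k$ together with a comparison of constant terms at $q=0$. Your route is self-contained and makes no appeal to the $k=1$ computation; the paper's inductive shortcut avoids redoing the Lipschitz bookkeeping at every level. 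Either argument is standard and adequate here; your caveat that the main hazard is sign/factorial bookkeeping in the Lipschitz step is exactly right.
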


\begin{proof}
Consider $\tilde Q_k(A,\hat A;\tau):=Q_k(A\tau-\hat A;\tau)$ as a function of real variables $A,\hat A$ and complex variable $\tau$.
The $\tau$-derivative
$$
G_{k+2}(A,\hat A;\tau)
:=\frac1{2\pi i}\,\frac{\d}{\d\tau}\ln Q_k(A,\hat A;\tau)
=q\frac{\d}{\d q}\ln Q_k(A,\hat A;\tau)
$$
is seen to be the Eisenstein series
$$
E_{k+2}(A,\hat A;\tau)
:=\frac{(-1)^{k+1}(k+1)!}{(2\pi i)^{k+2}}\sideset{}{'}\sum_{m,n\in\mathbb Z}\frac{e^{2\pi i(m\hat A+nA)}}{(m\tau+n)^{k+2}}
$$
of weight $k+2$. This is true for $k=1$ (see Section~\ref{sec4}), while for $k\ge1$ we observe the functional equation
$$
\frac{\partial}{\partial\hat A}E_{k+3}(A,\hat A;\tau)
=\frac{\partial}{\partial\tau}E_{k+2}(A,\hat A;\tau).
$$
The equality $G_{k+2}(A,\hat A;\tau)=E_{k+2}(A,\hat A;\tau)$ then follows by induction on $k$ using the fact that
the constant terms of both functions at $\tau=\infty$ (or $q=0$) agree.

Integrating we obtain
$$
\ln Q_k(A,\hat A;\tau)
=\frac{(-1)^kk!}{(2\pi i)^{k+1}}\sideset{}{'}\sum_{m,n\in\mathbb Z}\frac{e^{2\pi i(m\hat A+n A)}}{m(m\tau+n)^{k+1}}.
$$
Since both sides continuously depend on $A$ and $\hat A$, the formula remains valid also for $\ln Q_k(z;\tau)$.
\end{proof}

As in our computation in Section~\ref{sec4} we obtain
\begin{align*}
\ln|Q_k(z;\tau)|
&=\frac{(-1)^kk!}{2(2\pi i)^{k+1}}\sideset{}{'}\sum_{m,n\in\mathbb Z}e^{2\pi i(m\hat A+nA)}
\biggl(\frac1{m(m\tau+n)^{k+1}}-\frac1{m(m\ol\tau+n)^{k+1}}\biggr)
\\
&=\frac{(-1)^kk!}{2(2\pi i)^{k+1}}\sideset{}{'}\sum_{m,n\in\mathbb Z}
\frac{e^{2\pi i(m\hat A+nA)}(\ol\tau-\tau)}{(m\tau+n)^{k+1}(m\ol\tau+n)^{k+1}}
\sum_{j=0}^k(m\tau+n)^j(m\ol\tau+n)^{k-j}
\\
&=-\frac{i^kk!\,\Im\tau}{(2\pi)^{k+1}}\sum_{j=0}^k
\sideset{}{'}\sum_{m,n\in\mathbb Z}
\frac{e^{2\pi i(m\hat A+nA)}}{(m\tau+n)^{k-j+1}(m\ol\tau+n)^{j+1}}
\end{align*}
and
\begin{align*}
\ln|Q_k(\hat z;\hat\tau)|
&=-\frac{i^kk!\,\Im\tau}{(2\pi)^{k+1}|\tau|^2}\sum_{j=0}^k
\sideset{}{'}\sum_{m,n\in\mathbb Z}\frac{e^{2\pi i(-mA+n\hat A)}}{(n-m/\tau)^{j+1}(n-m/\ol\tau)^{k-j+1}}
\\
&=-\frac{i^kk!\,\Im\tau}{(2\pi)^{k+1}}\sum_{j=0}^k
\sideset{}{'}\sum_{m,n\in\mathbb Z}\frac{e^{2\pi i(m\hat A+nA)}\tau^{k-j}\,{\ol\tau}^j}{(m\tau+n)^{k-j+1}(m\ol\tau+n)^{j+1}}.
\end{align*}
Thus,
\begin{align*}
F_k(z;\tau)
&=\ln|Q_k(\hat z;\hat\tau)|-\tau^k\ln|Q_k(z;\tau)|
\\
&=\frac{i^kk!\,\Im\tau}{(2\pi)^{k+1}}\sum_{j=0}^k\tau^{k-j}(\tau^j-{\ol\tau}^j)
\sideset{}{'}\sum_{m,n\in\mathbb Z}\frac{e^{2\pi i(m\hat A+nA)}}{(m\tau+n)^{j+1}(m\ol\tau+n)^{k-j+1}}
\\
&=\frac{i^kk!}{2(2\pi)^k(\tau-\ol\tau)^k}\sum_{j=1}^k\tau^{k-j}(\tau^j-{\ol\tau}^j)D_{j+1,k-j+1}(q;x)
\\
&=\frac{i\,k!}{(4\pi\Im\tau)^k}\sum_{j=1}^k\tau^{k-j}\Im(\tau^j)\,D_{j+1,k-j+1}(q;x),
\end{align*}
where
\begin{equation}
D_{a,b}(q;x):=\frac{(\tau-\ol\tau)^{a+b-1}}{2\pi i}
\sideset{}{'}\sum_{m,n\in\mathbb Z}\frac{e^{2\pi i(m\hat A+nA)}}{(m\tau+n)^a(m\ol\tau+n)^b}
\label{Dab}
\end{equation}
for positive integers $a$ and $b$.

Finally, observe that the non-holomorphic Eisenstein series \eqref{Dab} can be identified with the elliptic polylogarithms
using a formula of Zagier \cite[Proposition 2]{Za90}. This leads to the following general result.

\begin{theorem}
\label{th4}
For $k\ge1$ and $z=A\tau-\hat A$, we have
$$
\ln|Q_k(\hat z;\hat\tau)|-\tau^k\ln|Q_k(z;\tau)|
=\frac{i\,k!}{(4\pi\Im\tau)^k}\sum_{j=1}^k\tau^{k-j}\Im(\tau^j)\,D_{j+1,k-j+1}(q;x),
$$
where
$$
D_{a,b}(q;x)
=\sum_{m=0}^\infty\bigl(D_{a,b}(xq^m)+(-1)^{a+b}D_{a,b}(x^{-1}q^{m+1})\bigr)
+\frac{(4\pi\Im\tau)^{a+b-1}}{(a+b)!}\,B_{a+b}(A)
$$
and
\begin{align*}
D_{a,b}(x)
&=(-1)^{a-1}\sum_{\ell=a}^{a+b-1}2^{a+b-\ell-1}\binom{\ell-1}{a-1}\frac{(-\ln|x|)^{a+b-\ell-1}}{(a+b-\ell-1)!}\,\Li_\ell(x)
\\ &\qquad
+(-1)^{b-1}\sum_{\ell=b}^{a+b-1}2^{a+b-\ell-1}\binom{\ell-1}{b-1}\frac{(-\ln|x|)^{a+b-\ell-1}}{(a+b-\ell-1)!}\,\ol{\Li_\ell(x)}.
\end{align*}
\end{theorem}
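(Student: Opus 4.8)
The first identity asserted by the theorem carries no new content: it is precisely the endpoint of the string of equalities displayed just above the statement. Starting from the Eisenstein representation of $\ln Q_k(z;\tau)$ supplied by Lemma~\ref{lem7}, one writes $\ln|Q_k|=\frac12(\ln Q_k+\ln\ol{Q_k})$, brings the difference $(m\tau+n)^{-(k+1)}-(m\ol\tau+n)^{-(k+1)}$ to a common denominator and expands its numerator as $(\ol\tau-\tau)\sum_{j=0}^{k}(m\tau+n)^j(m\ol\tau+n)^{k-j}$, treats $\ln|Q_k(\hat z;\hat\tau)|$ the same way after the substitution $\hat\tau=-1/\tau$, and then forms $\ln|Q_k(\hat z;\hat\tau)|-\tau^k\ln|Q_k(z;\tau)|$ and collects powers of $\tau$; the $j=0$ term drops out since $\tau^0-\ol\tau^{\,0}=0$, and feeding in the definition \eqref{Dab} of $D_{a,b}(q;x)$ one arrives exactly at the stated formula. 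So the only substantive point is the closed evaluation of the Kronecker--Eisenstein series $D_{a,b}(q;x)$ of \eqref{Dab} in terms of the holomorphic and anti-holomorphic polylogarithms making up $D_{a,b}(x)$.

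For that evaluation I would follow Zagier's Proposition~2 in \cite{Za90}, which is precisely the Fourier--Lipschitz expansion of such series; here is the mechanism. First isolate the term $m=0$ in the sum \eqref{Dab}; it equals $\sum_{n\ne0}e^{2\pi inA}n^{-(a+b)}$, and the classical evaluation $\sum_{n\ne0}e^{2\pi inA}n^{-s}=-(2\pi i)^sB_s(A)/s!$ (valid for $0\le A<1$, and for general real $A$ after replacing $A$ by its fractional part) turns it, after the normalising factor $(\tau-\ol\tau)^{a+b-1}/(2\pi i)$ and the substitution $\tau-\ol\tau=2i\Im\tau$, into the Bernoulli term $(4\pi\Im\tau)^{a+b-1}B_{a+b}(A)/(a+b)!$. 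For $m\ne0$, decompose, for fixed $m$, the rational function $(m\tau+n)^{-a}(m\ol\tau+n)^{-b}$ of $n$ into partial fractions: a combination of $(m\tau+n)^{-\ell}$ with $1\le\ell\le a$ and $(m\ol\tau+n)^{-\ell}$ with $1\le\ell\le b$, whose coefficients are binomial numbers times powers of $m(\tau-\ol\tau)$. Then apply the Lipschitz summation formula
$$
\sum_{n\in\mathbb Z}\frac{e^{2\pi inA}}{(n+w)^\ell}=\frac{(-2\pi i)^\ell}{(\ell-1)!}\sum_{r\ge1}(r-A)^{\ell-1}e^{2\pi i(r-A)w}\qquad(\Im w>0)
$$
to the range $m>0$, together with its $\Im w<0$ counterpart to the range $m<0$. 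Summing the resulting geometric series over $|m|\ge1$ produces the polylogarithms $\Li_\ell$ (and, from the $(m\ol\tau+n)^{-\ell}$ pieces, their complex conjugates) evaluated at $xq^j$ for $j\le0$ and at $x^{-1}q^j$ for $j\ge1$; rewriting the weights $(r-A)^{\bullet}$ through $r-A=-\log|x^{-1}q^r|/(2\pi\Im\tau)$ (and its mirror $s+A=-\log|xq^s|/(2\pi\Im\tau)$ in the $m<0$ range) converts them into powers of $-\log|\cdot|$, the surplus powers of $\tau-\ol\tau$ cancelling the prefactor and leaving the powers of $2$ visible in $D_{a,b}(x)$. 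Finally a re-indexing $\ell\mapsto a+b-\ell$ collects all contributions to each $\Li_\ell(y)$ and $\ol{\Li_\ell(y)}$ and reproduces the two sums defining $D_{a,b}(x)$, with the parity factor $(-1)^{a+b}$ separating the two families $xq^m$ and $x^{-1}q^{m+1}$ by way of $B_{a+b}(1-A)=(-1)^{a+b}B_{a+b}(A)$.

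The main obstacle is the combinatorial bookkeeping in this last step: one must check that the partial-fraction coefficients, multiplied by the factorial kernels of the Lipschitz formula and regrouped after the re-indexing, collapse to exactly $(-1)^{a-1}2^{a+b-\ell-1}\binom{\ell-1}{a-1}/(a+b-\ell-1)!$ on $\Li_\ell(y)$ for $a\le\ell\le a+b-1$ and $(-1)^{b-1}2^{a+b-\ell-1}\binom{\ell-1}{b-1}/(a+b-\ell-1)!$ on $\ol{\Li_\ell(y)}$ for $b\le\ell\le a+b-1$. This binomial identity is exactly what \cite[Proposition~2]{Za90} records, and we simply invoke it. A secondary technical point is that the partial-fraction pieces of order $1$ are only conditionally summable over $n$, so throughout one works with Eisenstein (symmetric) summation, which is the convention under which Zagier's statement is formulated and for which the borderline slot $D_{k+1,1}$ is handled along with the rest; one also keeps $0\le A<1$ and extends the final identity in $A$ by continuity and periodicity.
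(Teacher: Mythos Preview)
Your proposal is correct and matches the paper's own argument: the paper derives the first identity in the displayed computation immediately preceding the theorem and then simply cites \cite[Proposition~2]{Za90} for the polylogarithmic evaluation of $D_{a,b}(q;x)$, which is exactly what you do. Your additional sketch of Zagier's Lipschitz/partial-fraction mechanism is a welcome elaboration rather than a departure, and you appropriately flag that the final binomial bookkeeping is delegated to Zagier's statement.
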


\section{Conclusion}
\label{sec6}

This final (and very short!) part is devoted to highlighting some directions for further research.

In spite of generalisability of the story in Sections~\ref{sec2}--\ref{sec4} to the function
$$
F_k(z;\tau)
=\ln|Q_k(\hat z;\hat\tau)|-\tau^k\ln|Q_k(z;\tau)|,
$$
where $k\ge1$ and the product $Q_k(z;\tau)$ is defined in~\eqref{Rprod}, the case $k=1$ remains
the only one, which is invariant under translation $\tau\mapsto\tau+1$.
At the same time, Lemma~\ref{lem6} implies the transformation
$$
\tau^kF_k(\hat z,\hat\tau)=(-1)^{k-1}F_k(z,\tau)
\quad\text{for}\; k=1,2,\dotsc.
$$
This consideration does not exclude, however, a possibility for modified products \eqref{Rprod} and
related functions $F_k$ to exist such that the latter ones have true modular behaviour for each $k\ge1$.
It sounds to us a nice problem to determine such modular objects.

Several arithmetic problems related to the case $k=1$ (originating from the elliptic gamma function) are still open.
Our personal favourites include connection of \eqref{Qprod} with the Mahler measure and mirror symmetry;
see, for example, Observation in~\cite{St06}.

\section*{Acknowledgements}
We thank the anonymous referees for their careful reading of the manuscript and reporting valuable feedback.



\end{document}